\documentclass[12pt, reqno]{amsart}

\usepackage{amsmath, amsthm, amscd, amsfonts, amssymb, graphicx, color}
\usepackage[bookmarksnumbered, colorlinks, plainpages]{hyperref}

 \pagestyle{empty}
\hoffset = -1.5cm  \voffset = 0.1 in \setlength
{\textwidth}{15.1cm} \setlength {\topmargin}{0cm} \setlength
{\textheight}{22.2cm} \makeatletter \oddsidemargin.9375in
\evensidemargin \oddsidemargin \marginparwidth 2in \makeatother

\newtheorem{theorem}{Theorem}[section]
\newtheorem{cor}[theorem]{Corollary}
\newtheorem{lem}[theorem]{Lemma}
\newtheorem{prop}[theorem]{Proposition}
\newtheorem{defn}[theorem]{Definition}

\newtheorem{exam}[theorem]{Example}

\numberwithin{equation}{section}

\newcommand{\BR}{{\Bbb  R}}

\newcommand{\Lip}{{\rm Lip}}
\newcommand{\lip}{{\rm lip}}

\newcommand{\St}{{\rm St}}

\pagestyle{plain} \setcounter{page}{1}

\begin{document}

\title[Isometries on certain non-complete vector-valued function
spaces]{Isometries on certain non-complete vector-valued function
spaces}

\author{Mojtaba  Mojahedi and  Fereshteh Sady$^1$}

\subjclass[2010]{Primary 47B38, 47B33, Secondary 46J10}

\keywords{real-linear isometries, vector-valued function spaces, T-sets}

\maketitle
\begin{center}

\address{{\em   Department of Pure
Mathematics, Faculty of \\ Mathematical Sciences,
 Tarbiat Modares University,\\ Tehran, 14115-134, Iran}}

\vspace*{.25cm}
  \email{mojtaba.mojahedi@modares.ac.ir},
  \email{sady@modares.ac.ir}
\end{center}
\footnote{$^1$ Corresponding author}

\maketitle

\begin{abstract}
In the recent paper \cite{Hos}, surjective isometries, not necessarily linear, $T: {\rm AC}(X,E)
\longrightarrow {\rm AC}(Y,F)$ between vector-valued absolutely
continuous functions on compact subsets $X$ and $Y$ of the real
line, has been described. The target
spaces $E$ and $F$ are strictly convex  normed spaces. In this paper, we assume that $X$ and $Y$ are compact Hausdorff spaces and $E$ and $F$ are normed spaces, which are not assumed to be strictly convex. We describe (with a short proof)  surjective isometries $T: (A,\|\cdot\|_A) \longrightarrow (B,\|\cdot\|_B)$ between certain normed subspaces $A$ and $B$ of $C(X,E)$ and $C(Y,F)$, respectively.  We consider three cases for $F$ with some mild conditions.
The first case, in particular,  provides a short proof for the above result, without
assuming that the target spaces are strictly convex. The other cases give some generalizations in this topic.
 %
 %
 As a consequence, the  results can be applied, for  isometries (not necessarily linear) between  spaces of  absolutely continuous vector-valued functions, (little) Lipschitz functions and also continuously differentiable functions.
\end{abstract}

\section{introduction}
The characterization of linear isometries between specific Banach spaces of continuous functions is a longstanding problem. By the classical Banach-Stone theorem any linear isometry $T$ from the space $C(X)$ of continuous functions on a compact Hausdorff space $X$ onto  $C(Y)$, where $Y$ is a compact Hausdorff space, is a weighted composition operator.
 The theorem has various generalizations in different settings with respect to certain norms. The first result concerning the vector-valued generalization of Banach-Stone theorem has been given by Jerison in \cite{Jer}. By Jerison's result, if $E$ is a strictly convex Banach space, then any surjective linear isometry $T: C(X,E) \longrightarrow C(Y,E)$ between the spaces of all  $E$-valued continuous functions on compact Hausdorff spaces  $X$ and $Y$ has a canonical form.
This result has been  generalized  in different directions by considering surjective linear isometries between certain subspaces of continuous functions equipped with either supremum norm or  certain  complete norms. For the study of some known results in this topic we refer the reader to the nice books \cite{Flem1,Flem2}.

For a metric space $(X,d)$ and a normed space $E$, let ${\rm Lip}(X,E)$ be the space of all bounded $E$-valued Lipschitz functions $f$ on $X$ and let $L(f)$ be the Lipschitz constant of $f$, i.e. \[L(f)= \sup \{ \frac{\|f(x) - f(y)\|}{d(x,y)}: x,y
\in X, x\neq y\}.\] Then ${\rm Lip(X,E)}$ is a normed space under the norm $\|f\|=\max(\|f\|_\infty, L(f))$, $f\in {\rm Lip}(X,E)$, where $\|\cdot\|_\infty$ denotes the supremum norm.
Surjective linear isometries between spaces of Lipschitz functions (on compact metric spaces) with values in strictly convex Banach  spaces have been studied  in  \cite{Var}.  In \cite{Ar}, Araujo and Dubarbie study surjective linear isometries $T: {\rm Lip}(X,E) \longrightarrow {\rm Lip}(Y,F)$ between Lipschitz spaces of functions on metric spaces $X$ and $Y$,  not necessarily compact, with values in strictly convex (not necessarily complete) normed spaces $E$ and $F$. They assume that the  given isometry $T$ satisfies a condition called property {\bf P} and  investigate for the conditions under which $T$ is a weighted composition operator. By property {\bf P} for each $y\in Y$ there exists a constant function $f\in {\rm Lip}(X,E)$ with $Tf(y)\neq 0$. In \cite{Bot}, Botelho, Fleming and Jamison, by removing the strict convexity assumption, study surjective linear isometries $T: {\rm Lip}(X,E) \longrightarrow {\rm Lip}(Y,F)$  for the case that $X$ and $Y$ are compact metric spaces and $E$ and $F$ are quasi-sub-reflexive Banach spaces with trivial centeralizers. Then the result of \cite{Bot} has been improved in \cite{Ran} by removing quasi-sub-reflexivity assumption. We should note that in both \cite{Bot} and \cite{Ran},  $T$ is assumed to satisfy a property called {\bf Q} which is stronger than property {\bf P}.  This property asserts that for each $y\in Y$ and $u\in F$ there exists a constant function $f\in {\rm Lip}(X,E)$ such that $Tf(y)=u$.

More recently, in \cite{Hos} M. Hosseini gives a characterization of surjective, not necessarily linear, isometries $T:{\rm AC}(X,E) \longrightarrow {\rm AC}(Y,F)$ between spaces of $E$-valued and $F$-valued absolutely continuous functions on compact subsets $X$ and $Y$ of the real line, under the requirement that $E$ and $F$ are strictly convex normed spaces. By the Mazur-Ulam theorem such isometries are real-linear up to a translation. We should note that, as in \cite{Ar}, the approach of  \cite{Hos} is based on the relationship between such isometries and separating maps and the proofs do not work without  strict convexity assumption on the target spaces.

   In this paper we assume that $X$ and $Y$ are compact Hausdorff spaces and $E$ and $F$ are normed spaces which are not assumed to be strictly convex. We describe (with a short proof) surjective , not necessarily linear, isometries  $T: (A, \|\cdot\|_A) \longrightarrow (B,\|\cdot\|_B)$ between certain subspaces $A$ and $B$ of  $C(X,E)$ and $C(Y,F)$, respectively, equipped with some norms $\|\cdot\|_A$ and $\|\cdot\|_B$.  Our results will be stated for three cases for $F$. The hypothesis in each case is fulfilled by any strictly convex normed space.   Meanwhile, vector-valued function spaces such as spaces of vector-valued Lipschitz functions, absolutely continuous functions and  continuously differentiable functions satisfy our requirements. In particular, Case (i) in our results (Theorem \ref{main2}) provides a short proof for the main result of \cite{Hos}  without assuming that $E$ and $F$ are strictly convex.

\section{preliminaries}
Let $\Bbb K$ be the field of real or complex numbers. For a normed
space $E$ over $\Bbb K$ we denote its closed unit ball by $E_1$
and its unit sphere by $S(E)$. The notations $E^*$ and ${\rm ext}(E^*_1)$
will be used for the dual space of $E$ and the set of extreme points of $E^*_1$, respectively. For each
$e\in S(E)$ the star-like set ${\rm St}(e)$ is defined as
\[ \St(e)=\{ e'\in S(E): \|e+e'\|=2 \}.\] Then $\St(e)$  is the union
of all maximal convex subsets of $S(E)$ containing $e$. If $E$ is
strictly convex, then it is obvious that $\St(e)=\{e\}$ for all
$e\in E$. It is easy to see that if $u,v\in E$ satisfy $\|u+v\|=\|u\|+\|v\|$, then for all $\lambda, \mu \ge 0$ we have
$\|\lambda u+\mu v\|=\lambda \|u\|+\mu \|v\|$, see \cite[Lemma 4.1] {Jer}. Hence, if  $e\in S(E)$, then for each $e'\in \St(e)$ and $r>0$ we have $\|re+e'\|=r+1$. In particular, $\|re+e'\|>r=\|re\|$. Motivated by this, for each $u\in E$ we put
\[ \St_w(u)=\{e'\in S(E) : \|u+e'\|>\|u\|\}.\]
We note that if $u,v\in E$ and $\|u+v\|>\|u\|$, then $\|u+rv\|>\|u\|$ for all $r\in [1,+\infty)$. Hence
for the given $u\in E$ if $v\in E$ with $\|v\|\le 1$ satisfies $\|u+v\|>\|u\|$, then
$\|u+\frac{v}{\|v\|}\|>\|u\|$, that is  $\frac{v}{\|v\|}\in \St_w(u)$.

For a topological space $X$ and a normed space $E$ over $\Bbb K$, let $C(X,E)$ be the space of all continuous $E$-valued functions on $X$. For  an element $v$ in a normed space
$E$, the constant function $x \mapsto v$ in $C(X,E)$ will be denoted by $\hat{v}$.
For the case where $X$ is locally compact, $C_0(X,E)$ denotes the normed space of all continuous $E$-valued
functions on $X$ vanishing at infinity, with the
supremum norm $\|\cdot\|_\infty$.  Furthermore,  $C(X)$ and $C_{\Bbb
R}(X)$ denote the Banach spaces of complex-valued, respectively
real-valued continuous functions on a compact Hausdorff space $X$.

 Let  $\mathcal{Z}=C_0(X,E)$ where $X$ is a locally compact Hausdorff space and $E$ is a (real or complex) normed space.
Then, by \cite[Theorem 2.3.5]{Flem1}, we have \[ {\rm ext}(\mathcal{Z}^*_1)=\{v^*\circ \delta_x: v^*\in {\rm ext}(E^*_1), x\in X\},\]
where for each $x\in X$, $\delta_x: C_0(X,E)\longrightarrow E$ is defined by $\delta_x(f)=f(x)$.

A T-set in a normed space $E$ (over $\Bbb K$) is a subset $S$ of $E$ which is
maximal with respect to the property that  $\|e_1+\cdots
e_n\|=\|e_1\|+\cdots +\|e_n\|$ holds for any $n\in \Bbb N$ and $e_1,...,e_n\in S$.
A normed space $E$ is said to satisfy  property
(D) if for any pair of distinct  T-sets $S_1$ and $S_2$ we have
either $S_1\cap S_2=\{0\}$ or there exists a T-set $L$ with
$S_1\cap L=\{0\}=S_2\cap L$ (see Definition 7.2.10 in \cite{Flem2}). It is obvious that any strictly convex normed space satisfies property (D).
For some examples of nonstrictly convex normed spaces with this property, see Examples 7.2.11 in \cite{Flem2}.

Let $X$ be a  locally compact Hausdorff space and $E$ be a normed space. For each T-set $S$ in $E$ and any $t\in X$, the set
\[ (S,t)=\{ f\in C_0(X,E): f(t)\in S, \; {\rm and }\; \|f(t)\|=\|f\|_\infty \}\]
is a T-set in $C_0(X,E)$, and conversely any T-set in $C_0(X,E)$ is of this form, see  \cite[Lemma 7.2.2]{Flem2}. We should note that the lemma has been proven for the case that $E$ is a Banach space, however, the completeness of $E$ has no role in the proof.

Let $X$ be a compact Hausdorff space.
For a subspace $A$ of $C(X,\Bbb K)$ and a  normed space $E$ over
$\Bbb K$, let $A\otimes E$ be the linear span of $\{fe:  f\in A,
e\in E\}$ where for $f\in A$ and $e\in E$, the $E$-valued
continuous function $fe$ is defined by $fe(x)=f(x) e$, $x\in X$.
It is well-known that if $X$ is compact and $E$ is a complex
Banach algebra, then $C(X)\otimes E$ is dense in $C(X,E)$ , see \cite[Lemma 1]{Haus}. However
the same proof works for any normed space $E$. For the sake of
completeness we state it here.

\begin{lem}
Let $X$ be a Hausdorff space and $E$ be a normed space over $\Bbb
K$. Then for any $f\in C(X,E)$ with compact range,  and each
$\epsilon>0$ there exist $f_1, f_2, ...,f_n\in C_{\Bbb R}(X)$ with
compact ranges and elements $e_1,...,e_n\in E$ such that
$\sup_{x\in X}\|\Sigma_{i=1}^n f_i(x) e_i-f(x)\|\le \epsilon$.
\end{lem}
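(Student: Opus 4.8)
The plan is to transfer the whole problem from $X$, about which we know almost nothing, onto the compact range $K=f(X)\subseteq E$, which as a compact metric space is far more tractable. First I would fix $\epsilon>0$ and use compactness of $K$ to choose finitely many points $e_1,\dots,e_n\in K$ with $K\subseteq\bigcup_{i=1}^n B(e_i,\epsilon)$, where $B(e_i,\epsilon)$ denotes the open ball of radius $\epsilon$ about $e_i$ in $E$. These $e_i$ will be exactly the elements appearing in the conclusion, so everything reduces to producing suitable scalar coefficient functions.

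Next I would build a partition of unity on $K$ subordinate to this finite cover, but without appealing to any abstract paracompactness machinery on $X$. Concretely, for each $i$ set $g_i(w)=\max(0,\,\epsilon-\|w-e_i\|)$ for $w\in E$. Each $g_i$ is continuous and nonnegative, with $g_i(w)>0$ precisely when $\|w-e_i\|<\epsilon$. Since the balls cover $K$, the sum $\sum_{j=1}^n g_j$ is strictly positive on $K$, so $\phi_i:=g_i/\sum_{j=1}^n g_j$ is well defined and continuous on $K$, with $\phi_i\ge 0$, $\sum_{i=1}^n\phi_i\equiv 1$ on $K$, and $\phi_i(w)=0$ whenever $\|w-e_i\|\ge\epsilon$.

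Finally I would pull these back through $f$. Define $f_i:=\phi_i\circ f$; since $f$ is continuous with $f(X)=K$ and each $\phi_i$ is continuous on $K$, the $f_i$ lie in $C_{\BR}(X)$, and their ranges satisfy $f_i(X)=\phi_i(K)$, which is compact, so each $f_i$ has compact range as required. For the estimate, fix $x\in X$; using $\sum_i\phi_i(f(x))=1$ I would write
\[
\Big\|\sum_{i=1}^n f_i(x)e_i-f(x)\Big\|
=\Big\|\sum_{i=1}^n\phi_i(f(x))\,(e_i-f(x))\Big\|
\le\sum_{i=1}^n\phi_i(f(x))\,\|e_i-f(x)\|.
\]
Each summand with $\phi_i(f(x))\ne0$ forces $\|f(x)-e_i\|<\epsilon$, so the right-hand side is bounded by $\epsilon\sum_{i=1}^n\phi_i(f(x))=\epsilon$, uniformly in $x$. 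This gives exactly the claimed bound.

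The only real obstacle is the weakness of the hypothesis on $X$: it is merely Hausdorff, so one cannot manufacture a partition of unity directly on $X$. The device that circumvents this is to construct the partition of unity on the compact range $K$, where the elementary tent-function construction above works, and then compose with $f$; continuity of the $f_i$ and the compactness of their ranges then come for free from the corresponding properties on $K$. Beyond this observation the argument is a routine $\epsilon$-net estimate.
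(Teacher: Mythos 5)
Your proof is correct and follows essentially the same route as the paper's: cover the compact range $f(X)$ by $\epsilon$-balls, take a partition of unity on $f(X)$ subordinate to that cover, and pull it back through $f$ to get the scalar functions $f_i$. The only difference is that you build the partition of unity explicitly from tent functions $\max(0,\epsilon-\|w-e_i\|)$ rather than invoking its existence abstractly, which if anything makes the argument more self-contained.
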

\begin{proof}
For each $x\in X$, we put $V_x=\{e\in E: \|e-f(x)\|<\epsilon\}$.
Then clearly $f(X) \subseteq \cup_{x\in X}V_x$. Hence by
compactness assumption, there are $x_1,x_2,...,x_n\in X$ such that
$f(X) \subseteq \cup_{i=1}^n V_{x_i}$. Using partition of unity,
there are $\lambda_1,...,\lambda_n \in C_{\Bbb R}(f(X))$ such that
$\Sigma_i{\lambda_i}=1$ on $f(X)$ and ${\rm supp(\lambda_i})
\subseteq V_{x_i}$. For each $i=1,..,n$, put $e_i=f(x_i)$ and let
$f_i\in C_{\Bbb R} (X)$ be defined by $f_i=\lambda_i \circ f$. Then it is
easy to see that for each  $x\in X$, we have $\|\Sigma_{i=1}^n
f_i(x) e_i-f(x)\|\le \epsilon$, as desired.
\end{proof}

\begin{cor}
Let $X$ be a compact Hausdorff space and let $E$ be a normed space
over $\Bbb K$.  Then $C(X,\Bbb K)\otimes E$ is dense in $C(X,E)$.
\end{cor}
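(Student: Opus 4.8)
The plan is to deduce this directly from the preceding Lemma, the only additional input being that compactness of $X$ forces every continuous $E$-valued function to have compact range. First I would fix an arbitrary $f\in C(X,E)$ and $\epsilon>0$. Since $X$ is compact and $f$ is continuous, the range $f(X)$ is a compact subset of $E$, so $f$ automatically satisfies the hypothesis of the Lemma. Applying the Lemma then yields functions $f_1,\dots,f_n\in C_{\Bbb R}(X)$ and elements $e_1,\dots,e_n\in E$ with $\sup_{x\in X}\|\Sigma_{i=1}^n f_i(x)e_i-f(x)\|\le\epsilon$.

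Next I would observe that each $f_i$, being real-valued and continuous, lies in $C(X,\Bbb K)$ (regarding $\Bbb R\subseteq\Bbb K$), so the finite sum $g:=\Sigma_{i=1}^n f_i e_i$ belongs to the linear span $C(X,\Bbb K)\otimes E$. The displayed estimate is precisely $\|g-f\|_\infty\le\epsilon$, where $\|\cdot\|_\infty$ is the norm on $C(X,E)$. Since $f$ and $\epsilon$ were arbitrary, this shows that every element of $C(X,E)$ is approximated arbitrarily well in supremum norm by elements of $C(X,\Bbb K)\otimes E$, which is exactly the asserted density.

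There is essentially no genuine obstacle here: the whole content is carried by the Lemma, and the corollary merely removes the ``compact range'' qualifier by exploiting compactness of the domain. The single point worth checking is the inclusion $C_{\Bbb R}(X)\subseteq C(X,\Bbb K)$, which is immediate in the real case and, in the complex case, follows from viewing real-valued functions as complex-valued; this guarantees that the approximants produced by the Lemma genuinely sit inside $C(X,\Bbb K)\otimes E$.
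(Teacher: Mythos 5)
Your proof is correct and is exactly the argument the paper intends: the paper's proof is the one-line ``immediate from the above lemma,'' and you have simply spelled out the details (compactness of $X$ gives compact range, the approximants $\Sigma f_i e_i$ lie in $C(X,\Bbb K)\otimes E$). No discrepancy.
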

\begin{proof}
This is immediate from the above lemma.
\end{proof}

By the above corollary, any subspace $A(X,E)$ of $C(X,E)$
containing $A\otimes E$ for some dense subspace $A$ of $C(X,\Bbb
K)$, is dense in $C(X,E)$. In particular, the  $E$-valued
function spaces on $X$  in the next examples are all dense in the $C(X,E)$.

\begin{exam}\label{exam}
{\rm Let $E$ be a normed space over $\Bbb K$.

(i) For a compact metric space  $(X,d)$,  and $\alpha \in (0,1]$, we denote the space of all
$E$-valued Lipschitz functions of order $\alpha$ on $X$ by
$\Lip^\alpha(X , E)$. Then $\Lip^\alpha(X,E)$ is a normed space
with respect to the norm $\|f\|_\alpha=\max( \|f\|_\infty, L(f))$,
where \[L(f)= \sup \{ \frac{\|f(x) - f(y)\|}{d^\alpha(x,y)}: x,y
\in X, x\neq y\}.\] Meanwhile it is complete whenever $E$ is a Banach
space.

For $\alpha\in (0,1)$, the closed subspace $\lip^\alpha(X , E)$ of $\Lip^\alpha(X , E)$
consists of those functions $f \in \Lip^\alpha(X , E)$  such that
$ \lim_{d(x,y) \to 0} \frac{\|f(x) - f(y)\|}{d^\alpha(x,y)}=0.$ We
write $\Lip^\alpha(X)$ and $\lip^\alpha(X)$ for complex-valued
case.

(ii) For a compact subset $X$ of the real line, let ${\rm AC}(X,E)$ be the space of all absolutely continuous $E$-valued functions on  $X$. Then
\[ \|f\|=\max( \|f\|_\infty ,{\rm var}(f)),\;\;\;\; (f\in {\rm AC}(X,E))\]
defines a norm on ${\rm AC}(X,E)$, where ${\rm var}(f)$ is the total variation of $f\in {\rm AC}(X,E)$.

(iii) For $n\in \Bbb N$, the space $C^n([0,1],E)$ consists of all  $n$-times continuously
 differentiable $E$-valued functions on $X=[0,1]$.  Meanwhile,
\[ \|f\|=\max\{\|f^{(i)}\|_\infty, i=0,...,n\} \;\;\;\; (f \in C^n([0,1],E)),\]
is a norm on $C^n([0,1],E)$.

(iv) For $n\in \Bbb N$,  ${\rm Lip}^n([0,1],E)$ denotes the space of all $n$-times differentiable
$E$-valued Lipschitz functions  on $[0,1]$ whose derivatives are also
Lipschitz functions. Then
\[ \|f\| =\max\{\|f^{(i)}\|_\infty, L(f^{(i)}), i=0,...,n\}\]
defines a norm on ${\rm Lip}^n([0,1],E)$.
}
\end{exam}
\section{Main Results}
Our main theorem (Theorem \ref{main2}) characterizes surjective isometries $T$ between
certain (not necessarily complete) normed spaces of vector-valued
function spaces.  First we prove the result for the case that the domain and the range of $T$ is the whole space of continuous functions.

It should be noted that Case (i) in Theorem \ref{main2} provides a
short proof for the main result of \cite{Hos} without assuming
that the target spaces are strictly convex. Cases (ii) and (iii)
give some other generalizations in this topic. For example of a
non-strictly convex normed space $F$ satisfying the hypothesis of
Case (i) see \cite{Jam}. Indeed, there are infinitely many norms
$\|\cdot\|$ on $\BR^2$ such that $(\BR^2,\|\cdot \|)$ is
non-strictly convex and satisfies the hypothesis of Case(i). 

Since, by the Mazur-Ulam theorem, any  surjective isometry between
real normed spaces is real-linear up to a translation, we assume
that the given isometries are  real-linear.
\begin{theorem} \label{main1}
Let $X$ and $Y$ be  locally compact Hausdorff spaces and let $E$
and $F$ be  real or complex normed (not necessarily complete)
spaces. Let $T: C_0(X,E) \longrightarrow C_0(Y,F)$ be a surjective
real-linear isometry. Assume that one of the following conditions
holds:

{\rm (i)} $S(F)$ contains an element $v_0$ with ${\rm
St}(v_0)=\{v_0\}$.

{\rm (ii)} ${\rm ext}(F_1^*)$ contains an element $w_0^*$ with
${\rm St}(w_0^*)=\{w_0^*\}$.

{\rm(iii)} $F$ satisfies property {\rm (D)}.

\noindent Then there exist a continuous map  $\Phi:Y \longrightarrow X$, a family $\{V_y\}_{y\in Y}$ of bounded real-linear operators from $E$ to $F$ with $\|V_y\|\le 1$ such that for each $y\in Y$
\[Tf(y)= V_y(f(\Phi(y))) \qquad (f\in C_0(X,E)).\]
Moreover, if $E$ satisfies the same condition as $F$, then $\Phi$ is a homeomorphism and each $V_y$ is a surjective  isometry.
\end{theorem}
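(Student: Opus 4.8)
The plan is to reduce the representation to an induced correspondence between the distinguished functionals (equivalently, the T-sets) on the two spaces, and then to use the relevant hypothesis (i), (ii) or (iii) to show that this correspondence has a well-defined base-point component. Since $T$ is a surjective real-linear isometry, I would view $E$ and $F$ as real normed spaces, so that the adjoint $T^*$ is a surjective real-linear isometry between the duals carrying extreme points of the unit ball onto extreme points. Using the description $\ext(\Z^*_1)=\{v^*\circ \delta_x\}$ recalled above and its analogue for $F$, for each $y\in Y$ and each extreme functional $w^*$ of $F^*_1$ one obtains $x=x(y,w^*)\in X$ and $v^*\in \ext(E^*_1)$ with $w^*(Tf(y))=v^*(f(x(y,w^*)))$ for every $f$. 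For complex scalars I would run the same argument with real-linear functionals (real parts); equivalently, since a real-linear isometry preserves the additive norm identity, by \cite[Lemma 7.2.2]{Flem2} it carries each T-set $(S,t)$ of $C_0(X,E)$ onto a T-set $(S',t')$ of $C_0(Y,F)$, which gives the same correspondence in T-set language.

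The heart of the argument, and the step I expect to be the \emph{main obstacle}, is to prove that the base point $x(y,w^*)$ is independent of the chosen functional, so that $\Phi(y):=x(y,w^*)$ is well defined; this is exactly where strict convexity is usually invoked and where the three hypotheses take over. Under (i) I would use the element $v_0$ with $\St(v_0)=\{v_0\}$: if $e$ lies in a T-set of $F$ together with $v_0$, then $\|v_0+e\|=1+\|e\|$ forces $e/\|e\|\in \St(v_0)=\{v_0\}$ by the homogeneity relation recalled in the preliminaries, so the unique T-set of $F$ containing $v_0$ is the ray $\{rv_0:r\ge 0\}$. Hence $(\{rv_0:r\ge 0\},y)$ is a rigid T-set of $C_0(Y,F)$ and its $T^{-1}$-image $(S,t)$ singles out a unique $t$, which I set to be $\Phi(y)$. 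Under (ii) the distinguished extreme functional $w_0^*$ plays the analogous role through the description of $\ext(F^*_1)$, and under (iii) property (D) is precisely the compatibility condition guaranteeing that the base points attached to different T-sets agree. In each case the concrete target is the support implication $f(\Phi(y))=0\Rightarrow Tf(y)=0$, which I would obtain by a peaking argument: approximating, via the density lemma and its corollary above, functions that concentrate their norm near $\Phi(y)$ and comparing norm-additivity before and after applying $T$.

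Granting the support implication, I would define $V_y$ by $V_y(f(\Phi(y)))=Tf(y)$. This is single-valued, and since every $e\in E$ equals $f(\Phi(y))$ for some $f\in C_0(X,E)$ (local compactness and Urysohn), it is defined on all of $E$; it is real-linear because $T$ is, and of norm at most $1$ because choosing $f$ with $\|f\|_\infty=\|f(\Phi(y))\|=\|e\|$ gives $\|V_y(e)\|=\|Tf(y)\|\le \|Tf\|_\infty=\|f\|_\infty=\|e\|$. Continuity of $\Phi$ would follow from continuity of $y\mapsto Tf(y)$ applied to test functions that peak sharply at prescribed points of $X$, by a standard net argument. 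Together these give the representation $Tf(y)=V_y(f(\Phi(y)))$.

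For the final ``moreover'' assertion I would exploit the symmetry of the hypotheses: $T^{-1}$ is again a surjective real-linear isometry and, by assumption, $E$ satisfies the same condition as $F$, so the preceding analysis applied to $T^{-1}$ yields a continuous $\Psi:X\longrightarrow Y$ and real-linear operators $W_x$ with $T^{-1}g(x)=W_x(g(\Psi(x)))$. Substituting one representation into the other and using surjectivity, I would check that $\Psi=\Phi^{-1}$, so that $\Phi$ is a homeomorphism, and that $W_{\Phi(y)}$ is a two-sided inverse of $V_y$; since both have norm at most $1$, each $V_y$ is then a surjective isometry.
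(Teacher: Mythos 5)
Your overall strategy is the paper's: pass to the adjoint and the extreme points $v^*\circ\delta_x$ (equivalently, to T-sets), prove that the base point attached to the distinguished functionals/T-sets at a fixed $y$ is well defined, deduce the support implication $f(\Phi(y))=0\Rightarrow Tf(y)=0$, define $V_y$ through it, and obtain the ``moreover'' part by applying the argument to $T^{-1}$ exactly as the paper does. Your Case (i) is essentially correct and is an equivalent reformulation of the paper's: the paper works with an extreme functional $w^*$ supporting $v_0$ and the face $R=\{u\in S(F):w^*(u)=1\}=\{v_0\}$, while you work with the ray $\{rv_0:r\ge 0\}$, which is indeed the unique T-set containing $v_0$; the cancellation $T(f+g)(y)=Tg(y)=v_0$ for $g$ supported near $\Phi(y)$ is the same mechanism in both formulations (though the density fact you need there is that functions vanishing at a point are uniform limits of functions vanishing near it, not the $C(X,\Bbb K)\otimes E$ density lemma you cite).

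The genuine gaps are in Cases (ii) and (iii), where the only nontrivial step is asserted rather than proved. Saying that $w_0^*$ ``plays the analogous role'' is not an argument: $w_0^*$ is a functional, not a vector, and the mechanism is different. What is actually needed (and what the paper does) is this: if $w_0^*$ and another extreme $w^*$ were represented at distinct points $x_0\neq x$, one constructs $f\in C_0(X,E)$ with $\|f\|_\infty=1$, $v_0^*(f(x_0))>1-\epsilon$ and $v^*(f(x))>1-\epsilon$ simultaneously, forcing $\|w_0^*+w^*\|=2$ and hence $w^*\in\St(w_0^*)=\{w_0^*\}$, a contradiction; the support implication then follows because the extreme points of $F_1^*$ separate points of $F$ --- not from a peaking/norm-additivity argument, which is what you propose uniformly for all three cases. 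Likewise in Case (iii) one must first verify that $T^{-1}(R,y)=(S,x)$ with $S=\{e\in E: v^*(e)=\|e\|\}$ a genuine T-set, and then use property (D) concretely: if two T-sets $R,R_1$ gave distinct base points $x\neq x_1$, then either $R\cap R_1=\{0\}$ or some T-set $L$ meets both trivially, yet the function $h=fe+ge_1$ built from disjointly supported Urysohn functions lies in $(S,x)\cap(S_1,x_1)$, contradicting $(R,y)\cap(R_1,y)=\{0\}$. ``Property (D) is precisely the compatibility condition'' does not supply this construction, so as written two of the three cases remain unproved.
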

\begin{proof}
Without loss of generality we assume that $E$ and $F$ are real normed spaces. Let $C_0(X,E)^*$ and $C_0(Y,F)^*$ denote the (real) duals of $C_0(X,E)$ and $C_0(Y,F)$, respectively, and  $T^*: C_0(Y,F)^*
\longrightarrow C_0(X,E)^*$ be the adjoint of $T$ as a bounded real-linear
 operator. Then $T^*$ is a surjective real-linear isometry and hence for each pair $(w^*,y) \in {\rm ext}(F^*_1) \times Y$ there
exists a pair $(v^*,x) \in {\rm ext}(E^*_1) \times X$ such that
\begin{align*}
 w^*(Tf(y))=v^*(f(x))\;\;\;\;\; (f\in C_0(X,E)).
\end{align*}

{\bf Case (i).} By assumption, the singleton $\{v_0\}$
is a maximal convex subset of $S(F)$. Choose $w^* \in {\rm ext}
(F^*_1)$ such that $w^*(v_0)=\|v_0\|=1$. Then the  set $R=\{u\in
S(F) : w^*(u)=1\}$  is a convex subset of $S(F)$ containing  $v_0$
and since $\{v_0\}$ is maximal we get $R=\{v_0\}$. Fix a point
$y\in Y$. Since $w^*\circ \delta_y$ is an extreme point of the unit ball of $C_0(Y,F)^*$   there
are $v^*\in {\rm ext}(E^*_1)$ and $x\in X$ such that
  \begin{align}\label{ext}
  w^*(Tf(y))=v^*(f(x))\;\;\;\;\; (f\in C_0(X,E)).
  \end{align}
We note that if $v_1^*\in {\rm ext}(E^*_1)$ and $x_1\in X$ such that $v^*(f(x))=v_1^*(f(x_1))$ for all
$f\in C_0(X,E)$, then $v_1^*=v^*$ and $x_1=x$. Hence for each $y\in Y$, the point $x\in X$ satisfying (\ref{ext}) for some
$v^*\in {\rm ext}(E^*_1)$ is uniquely determined.
For any $f\in C_0(X,E)$ with  $Tf(y)=v_0$ and $\|Tf\|_\infty=1$ we have
$v^*(f(x))=1$ which implies that $\|f(x)\|=1=\|v^*\|$. This shows
that the set $S_0=\{e\in S(E) : v^*(e)=1\}$ is nonempty. On
the other hand, for any $f\in C_0(X,E)$ with $\|f\|_\infty=1$ and
$f(x)\in S_0$, it follows from the  above equality that
$w^*(Tf(y))=1$. This implies that $\|Tf(y)\|=1$ and since
$w^*(Tf(y))=1$ we conclude that $Tf(y)\in R=\{v_0\}$, i.e.
$Tf(y)=v_0$.

We claim that $Tf(y)=0$ for each $f\in C_0(X,E)$ vanishing on a
neighborhood of $x$.  For this, assume that $f\in C_0(X,E)$ such
that $\|f\|_\infty=1$ and $f$ vanishes on a neighbourhood of $U$ of
$x$. Fixing $e\in S_0$, we can find  $g\in C_0(X,E)$ such that ${\rm
coz}(g)\subseteq U$, $\|g\|_\infty=1$ and $g(x)=e$. Then
$\|f+g\|_\infty=1$. We note that  $g(x)=e$ and $\|g\|_\infty=1$ and hence,
by the above argument, we have $Tg(y)=v_0$. Similarly since
$f(x)+g(x)=e$ and $\|f+g\|_\infty=1$ we have $T(f+g)(y)=v_0$. This
concludes, by the real-linearity of $T$, that  $Tf(y)=0$. Since
each function $f\in C_0(X,E)$ with $f(x)=0$ is the  uniform limit of a
sequence $\{f_n\}$ in $C_0(X,E)$ such that each $f_n$ vanishes on a
neighborhood of $x$, we conclude that $Tf(y)=0$  for all $f\in
C_0(X,E)$  satisfying $f(x)=0$.

 Thus for each $y\in Y$ and the (unique) point $x\in X$ satisfying  (\ref{ext})
 \begin{quote}
 $f(x)=0$ implies that $Tf(y)=0$ for all $f\in C_0(X,E)$.
 \end{quote}
  Let $\Phi: Y\longrightarrow X$ be the map which associates to each $y\in Y$ the unique point $x\in X$ satisfying the above implication.

 For each $y\in Y$, we define  $V_y(e)= Tf_0(y)$, $e\in E$, where $f_0\in C_0(X,E)$ is such that $f_0(\Phi(y))=e$. We note that, by the above implication,  the definition of $V_y(e)$ is independent of  the function $f_0$ with the mentioned property. Then $V_y: E \longrightarrow F$ is a real-linear operator satisfying
 \begin{align} \label{eq}
 Tf(y) = V_y(f(\Phi(y)) \;\;\;\;\; f\in C_0(X,E).
 \end{align}
Since for each $y\in Y$ and $e\in S(E)$ we can choose a function $f_0\in C_0(X,E)$ with $\|f_0\|_\infty=1$ and $f_0(\Phi(y))=e$ it follows easily that $\|V_y\|\le 1$.

To prove that $\Phi$ is continuous, assume that $y_0\in Y$ and $U$ is an open neighbourhood of $\Phi(y_0)$ in $X$.
Put $M=\|V_{y_0}\|$. We note that surjectivity of $T$ and the description given in (\ref{eq}) shows that $V_{y_0}\neq 0$.  For a small enough $\epsilon>0$, choose $e\in S(E)$ with $\|V_{y_0}(e)\|>M-\epsilon$. Let $f\in C_0(X,E)$ such that $f(\Phi(y_0))=e$, $\|f\|_\infty=1$ and $\|f(x)\|<\frac{M-\epsilon}{2}$ for all $x\in X \backslash U$.
Then $W=\{y\in Y: \|Tf(y)\|> \frac{M-\epsilon}{2}\}$ is a neighbourhood of $y_0$ and the equality (\ref{eq}) implies that $\Phi(W) \subseteq U$. Hence $\Phi$ is continuous.

Finally, if $E$ satisfies the same condition as $F$, then using the same argument for $T^{-1}$ there are a continuous map $\Psi: X\longrightarrow Y$ and a family $\{W_x\}_{x\in X}$ of bounded real-linear operators from $F$ to $E$ such that $\|W_x\|\le 1$ and \[T^{-1}(g)(x)= W_x(g(\Psi(x))) \qquad (g\in C_0(Y,F)).\]
Then an easy verification shows that $\Psi=\Phi^{-1}$ and $V_y=W_{\Phi(y)}^{-1}$. In particular, $\Phi$ is a homeomorphism and each $V_y$ is a surjective isometry.

{\bf Case (ii)} Let $y\in Y$. Since $w_0^*\in {\rm ext}(F^*_1)$
there exists $v_0^*\in {\rm ext}(E^*_1)$ and a point $x_0\in X$
such that
\[w_0^*(Tf(y))= v_0^*(f(x_0))\qquad (f\in C_0(X,E)).\]
For each $w^* \in {\rm ext}(F^*_1)$ distinct from $w_0^*$, choose
$v^*\in {\rm ext}(E^*_1)$ and $x\in X$ such that
\[w^*(Tf(y))= v^*(f(x))\qquad (f\in C_0(X,E)).\]
We claim that $x=x_0$. Assume on the contrary that $x\neq x_0$.
Since $\|v_0^*\|=\|v^*\|=1$, for each $\epsilon>0$ we can find
$e_0,e\in S(E)$ such that $v_0^*(e_0)> 1-\epsilon$ and $v^*(e)>
1-\epsilon$. Choose $f\in C_0(X,E)$ with
$\|f\|_\infty=1$, $f(x_0)=e_0$ and $f(x)=e$. Then using the above
equalities we get
\[ w_0^*(Tf(y))>1-\epsilon, \;\; {\rm and} \;\; w^*(Tf(y))>1-\epsilon.\]
Thus
\[\|w_0^*+w^*\|\ge w_0^*(Tf(y))+w^*(Tf(y))>2-\epsilon,\]
that is $\|w_0^*+w^*\|\ge 2-\epsilon$ for all $\epsilon>0$. Hence
$\|w_0^*+w^*\|=2$ and consequently $w^*\in {\rm St}(w_0^*)=\{w_0^*\}$, a contradiction.
Hence $x=x_0$.

The argument above shows that for each $y\in Y$ there exists a
(unique) point $x\in X$ such that for each $w^*\in {\rm ext}(F^*_1)$ we
have
\[ w^*(Tf(y))=v^*(f(x))\qquad (f\in C_0(X,E))\]
for some $v^*\in {\rm ext}(E^*_1)$. Now for each $f\in C_0(X,E)$
with $f(x)=0$ we have $w^*(Tf(y))=0$ for all $w^*\in {\rm
ext}(F^*_1)$, which concludes that $Tf(y)=0$.
Then, as in Case (i), we can define a continuous map $\Phi: Y \longrightarrow X$ and a family $\{V_y\}_{y\in Y}$ of bounded real-linear maps from $E$ to $F$ satisfying the desired conditions.

{\bf  Case (iii).} For an arbitrary T-set $R$ in  $F$ we put
\[\Gamma_R=\{ w^*\in {\rm ext} (F^*_1): w^*(u)=\|u\|\ \ \   {\rm for \; all}\;  u\in R\}. \]
Then, using Krein Milman Theorem, $\Gamma_R$ is nonempty (see \cite[Lemma 7.2.4]{Flem2}). We note that  for any $w^*\in
\Gamma_R$ we have $R\subseteq \{u\in F : w^*(u)=\|u\|\}$ and since
for each $u_1, ..., u_n\in F$ with $w^*(u_i)=\|u_i\|$ we have
$\|u_1+\cdots +u_n\|=\|u_1\|+\cdots +\|u_n\|$, the maximality of
$R$ implies that
\[R=\{u\in F : w^*(u)=\|u\|\}.\]
Now for any $w^*\in  \Gamma_R$ and for any $y\in Y$ there are $v^*\in {\rm ext}(E^*_1)$ and $x\in X$ such that
 \begin{align} \label{1}
 w^*(Tf(y))=v^*(f(x))\;\;\;\;\; (f\in C_0(X,E)).
 \end{align}
 We claim that there is a T-set $S$ in  $E$ such that $v^*\in \Gamma_S$ and $T^{-1}(R,y)=(S,x)$. For this, choose an arbitrary function  $f_0\in C_0(X,E)$ with $\|f_0\|_\infty=1$ and $Tf_0\in (R,y)$. Then
 \[v^*(f_0(x))=w^*(Tf_0(y))=\|Tf_0(y)\|=1=\|v^*\|. \]
 Hence $\|f_0(x)\|=1= v^*(f_0(x))$, and consequently the set  $S=\{e\in E : v^*(e)=\|e\|\}$ is nonempty. Since for any $e_1,...,e_n\in S$ we have $\|e_1+\cdots +e_n\|=\|e_1\|+\cdots +\|e_n\|$ there exists a T-set $S_0$ in $E$ such that $S\subseteq S_0$.  We note that  $T^{-1}(R,y)=(S,x)$. Indeed,
  take  $f\in C_0(X,E)$ such that  $Tf\in (R,y)$. Then $\|Tf\|_\infty=\|Tf(y)\|$ and $Tf(y)\in R$. Hence
 \[\|f\|_\infty=\|Tf\|_\infty=\|Tf(y)\|=w^*(Tf(y))=v^*(f(x))\le \|f(x)\|\le \|f\|_\infty,\]
 which conclude that $\|f(x)\|=\|f\|_\infty$ and $v^*(f(x))=\|f(x)\|$, i.e. $f(x)\in S$. Hence   $f\in (S,x)$. Conversely, if $f\in (S,x)$, then  $\|f(x)\|=\|f\|_\infty$ and $f(x)\in S$. A similar argument shows that  $w^*(Tf(y))=\|Tf(y)\|=\|Tf\|_\infty$, that is  $Tf\in (R,y)$. This proves that $T^{-1}(R,y)=(S,x)$. Hence it suffices to show that $S=S_0$. Since $(R,y)$ is a T-set in $C_0(Y,F)$ and $T^{-1}$ is an isometry, it follows that $(S,x)$ is also a T-set in $C_0(X,E)$. The inclusion $(S,x) \subseteq (S_0,x)$ implies that $(S,x)=(S_0,x)$ and this easily implies that $S=S_0$, as desired.

 Since $F$ satisfies  property (D), using the same argument as in \cite[Theorem 7.2.13]{Flem2} we can show that for each $y\in Y$, the point $x\in X$ in equality (\ref{1}) is unique for all $w^*\in \cup  \Gamma_R $ where the union is taken over all T-sets $R$ in $F$.  Indeed, assume that $w^*\in \Gamma_R$ and $w_1^*\in \Gamma_{R_1}$ for some T-sets $R$ and $R_1$ in $F$ and
 \[ T^{-1}(R,y)=(S,x) \;\;{\rm and} \; \; T^{-1}(R_1,y)=(S_1,x_1)\]
 where $x, x_1\in X$ are distinct and $S$ and $S_1$ are T-sets in $E$.  By assumption we have either $R\cap R_1=\{0\}$ or there exists a T-set $L$ in $F$ such that
 $R\cap L= R_1\cap L=\{0\}$. In the first case we have $(R,y)\cap (R_1,y)=\{0\}$ and consequently $(S,x)\cap (S_1,x_1)=\{0\}$. Since $x\neq x_1$ we can find functions $f,g\in C_0(X)$ such that $0\le f,g \le 1$, $fg=0$, $f(x)=1$ and $g(x_1)=1$. Then for nonzero elements $e\in S$ and $e_1\in S_1$ the nonzero function $h=fe+ge_1$ in $C_0(X,E)$ belongs to the intersection $(S,x)\cap(S_1,x_1)$, which is impossible. Similarly, the other case implies that $x=x_1$.

Hence  we can define a map $\Phi: Y \longrightarrow X$ such that for a given $y\in Y$ and each  $w^*\in \cup_{R}\Gamma_R$
there exists $v^*\in \cup_{S} \Gamma_S$ satisfying
\[w^*(Tf(y))=v^*(f(\Phi(y)))\;\;\;\;\; (f\in C_0(X,E)).\]
 Now if $f\in C_0(X,E)$  such that $f(\Phi(y))=0$, then by the above equality,
  $w^*(Tf(y))=0$ for all  $w^*\in \cup_{R}\Gamma_R$. Choose a  T-set $R_0$ in $F$ containing $Tf(y)$ and let $w^* \in \Gamma_{R_0}$. Then  $0=w^*(Tf(y))=\|Tf(y)\|$, which implies that $Tf(y)=0$. Hence $Tf(y)=0$ for  all $f\in C_0(X,E)$ with $f(\Phi(y))=0$. Now, as in Case (i), $\Phi$ is a continuous
  map and we can find  a family $\{V_y\}_{y\in Y}$ of real-linear operators from $E$ to $F$ satisfying the desired properties.
  \end{proof}

\vspace*{.2cm} We define the following property for an isometry
$T: A \longrightarrow B$ between subspaces $A$ and $B$ of $C(X,E)$
and $C(Y,F)$, respectively and then we compare it with the similar
properties considered in \cite{Ar}, \cite{Hos}, and \cite{Bot,Ran}.

\begin{defn}  Let $X$ and $Y$ be compact Hausdorff spaces and let $E$ and $F$ be real or complex normed spaces. Assume that  $A$ and $B$ are  subspaces of $C(X,E)$ and $C(Y,F)$, respectively, equipped with the norms of the form
\[ \|\cdot \|_A= \max(\|\cdot\|_\infty, p(\cdot)) \;{\rm and}\; \|\cdot \|_B= \max(\|\cdot\|_\infty, q(\cdot)),\]
where $p$ and $q$ are seminorms on $A$ and $B$, respectively, whose kernels contain the constant functions. We say that a surjective real-linear isometry  $T:A \longrightarrow B$ has property  {\rm (St)} if
\begin{quote}
{\rm (St)} For each $u\in F$ and $y_0\in Y$ there exists $v\in
S(E)$ such that $\|T\hat{v}(y_0)+u\|>\|u\|$, i.e.
$\frac{T(\hat{v})(y_0)}{\|T(\hat{v})(y_0)\|} \in \St_w(u)$.
\end{quote}
\end{defn}
\vspace*{.2cm}

\noindent
{\bf Remark.} The following observations are simple.

(i) Property {\bf Q} in \cite{Bot,Ran} implies both  (St) and property {\bf P} in \cite{Ar}.

(ii)  In the case that $F$ is strictly convex (this is assumed in
\cite{Ar} and \cite{Hos}), property {\bf P}  implies (St).

(iii) If $X$ is a compact subset of $\Bbb R$, $E,F$ are  strictly
convex and $A=AC(X,E)$ and $B=AC(Y,F)$, then, by Lemmas 3.6 and 3.7  in \cite{Hos},  the condition $a,b\in \mathcal{N}(T)$ in \cite{Hos} implies that $T$ and $T^{-1}$ satisfy {\bf P} and consequently satisfy (St). Here $a= \min Y$ and $b=\max Y$  and $\mathcal{N}(T)= \{y\in Y: T\hat{e}(y)\neq 0\;{\rm for\;\; some \;\;} e\in E \}$.

 \begin{prop}\label{Pr}
Let $X$ and $Y$ be compact Hausdorff spaces and let $E$ and $F$ be real or complex  normed spaces, not assumed to be complete. Assume that
 $A$ and $B$ are subspaces of $C(X,E)$ and $C(Y,F)$, respectively containing constants and  $\|\cdot\|_A$ and $\|\cdot \|_B$ are norms on $A$ and $B$ such that
 \[ \|\cdot \|_A= \max(\|\cdot\|_\infty, p(\cdot)) \;{\rm and}\; \|\cdot \|_B= \max(\|\cdot\|_\infty, q(\cdot))\]
for some seminorms $p$ and $q$ on $A$ and $B$, respectively, whose kernels contain the constants.
If  $T:A \longrightarrow B$ is a surjective real-linear isometry and $T$ and $T^{-1}$ satisfy {\rm (St)}, then  $T$ is an isometry with respect to the supremum norms on $A$ and $B$.
  \end{prop}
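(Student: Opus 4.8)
The plan is to prove the single inequality $\|Tf\|_\infty \ge \|f\|_\infty$ for every $f \in A$, and then invoke symmetry: applying the same reasoning with $T$ and $T^{-1}$ interchanged (legitimate because both satisfy (St)) yields $\|T^{-1}g\|_\infty \ge \|g\|_\infty$ for all $g \in B$, and taking $g = Tf$ gives $\|f\|_\infty \ge \|Tf\|_\infty$, whence equality. Throughout I would record the elementary consequences of the hypotheses: since the constants lie in $\ker p$ and $\ker q$, for $v \in S(E)$ the constant $\hat v$ has $\|\hat v\|_A = \|v\| = 1$, so $\|T\hat v\|_B = 1$ and in particular $\|T\hat v\|_\infty \le 1$; dually, for $w \in S(F)$ the function $\psi := T^{-1}\hat w$ satisfies $\|\psi\|_A = 1$, hence $\|\psi\|_\infty \le 1$.

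The main device is translation by images of constants. Fix $f$ and $\epsilon > 0$, and choose $x_0 \in X$ with $\|f(x_0)\| > \|f\|_\infty - \epsilon$ (the case $\|f\|_\infty = 0$ being trivial). Applying (St) for $T^{-1}$ at the point $x_0$ and the vector $u = f(x_0)$ produces $w \in S(F)$ with $\|f(x_0) + \psi(x_0)\| > \|f(x_0)\|$, where $\psi = T^{-1}\hat w$; by the scaling observation of Section 2 this persists for all larger multiples, so $\|f + s\psi\|_\infty > \|f\|_\infty - \epsilon$ for every $s \ge 1$. The crucial structural point is that $\hat w$ is a \emph{constant} in $B$, so that, using real-linearity and $T(f + s\psi) = Tf + s\hat w$,
\[ \|Tf + s\hat w\|_B = \max\bigl(\|Tf + s\hat w\|_\infty,\; q(Tf)\bigr) = \|f + s\psi\|_A,\]
with the seminorm contribution on the left \emph{frozen} at $q(Tf)$ for every $s$, since $s\hat w \in \ker q$. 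To neutralize the additive effect of $\hat w$ (whose supremum norm is $1$) I would run the whole construction for $Nf$ in place of $f$ and let $N \to \infty$, so that the constant term contributes $O(1)$ against a quantity of size $N(\|f\|_\infty - \epsilon)$, and then let $\epsilon \to 0$.

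The hard part will be the decoupling of the supremum norm from the seminorm: the displayed identity transmits the large value $\|f + s\psi\|_\infty > \|f\|_\infty - \epsilon$ produced at $x_0$ back to $\|Tf\|_\infty$ \emph{only} if the maximum on the left is realized by the supremum part $\|Tf + s\hat w\|_\infty$ rather than by the frozen seminorm $q(Tf)$; when $q(Tf)$ dominates one extracts a lower bound on $q(Tf)$ and no information on $\|Tf\|_\infty$. Securing supremum-dominance is exactly the role of property (St): it furnishes a constant direction $\hat w$ whose pullback strictly increases the norm precisely at the chosen near-peak $x_0$, which is what should force the supremum part to carry $\|Tf + s\hat w\|_B$. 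I expect the cleanest route is an asymptotic comparison of the two sides as $s \to \infty$, subtracting $s$ and passing to the limit so that only the supremum-norm "slopes" survive; one must then handle the possibility that $\psi$ does not attain its supremum norm, so that the relevant exposed face is seen only in the limit. Once supremum-dominance is established, the homogeneity limit $N \to \infty$ followed by $\epsilon \to 0$ gives $\|Tf\|_\infty \ge \|f\|_\infty$, and the symmetry noted above finishes the proof.
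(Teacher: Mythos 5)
Your overall architecture (prove $\|Tf\|_\infty\ge\|f\|_\infty$ and then swap the roles of $T$ and $T^{-1}$) and your main device (translating by $\psi=T^{-1}\hat w$ so that the seminorm on the image side is frozen at $q(Tf)$) are both sensible and consistent with the kind of argument the paper relies on. But the proof is not complete: the step you yourself label ``the hard part'' --- ruling out that the maximum in $\|NTf+\hat w\|_B=\max(\|NTf+\hat w\|_\infty,\,Nq(Tf))$ is carried by the frozen seminorm --- is where all the content lies, and the limiting procedure you sketch cannot close it. Carrying your computation through gives $\max\bigl(N\|Tf\|_\infty+1,\,Nq(Tf)\bigr)>N(\|f\|_\infty-\epsilon)$; dividing by $N$ and letting $N\to\infty$ and $\epsilon\to 0$ yields only $\max(\|Tf\|_\infty,q(Tf))\ge\|f\|_\infty$, i.e.\ $\|Tf\|_B\ge\|f\|_\infty$, which is already immediate from $\|Tf\|_B=\|f\|_A$. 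Note moreover that when $q(Tf)\le\|Tf\|_\infty$ the desired inequality is trivial, since then $\|Tf\|_\infty=\|Tf\|_B=\|f\|_A\ge\|f\|_\infty$; so the only case that matters is $q(Tf)>\|Tf\|_\infty$, which is exactly the case in which your lower bound can be absorbed entirely by the seminorm term. Property (St), used only through $T^{-1}$ at a near-peak of $f$ as in your sketch, gives no information relating $q(Tf)$ to $\|Tf\|_\infty$, so ``supremum-dominance'' does not follow from anything you have written; the asymptotic ``subtract $s$ and compare slopes'' idea is a heuristic, not an argument.

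What is missing is precisely the intermediate comparison between the two seminorms that the paper uses: following \cite{Ran}, one first proves (by a minor modification of \cite[Lemma 1.2]{Ran}, which exploits (St) on the other side, i.e.\ perturbing at a near-peak of $Tf$ rather than of $f$) that $q(Tf)<\|Tf\|_\infty$ implies $p(f)\le\|f\|_\infty$, and then runs the argument of \cite[Proposition 1.3]{Ran} to obtain $\|Tf\|_\infty\le\|f\|_\infty$ for all $f$, concluding by symmetry. Some ingredient of this kind --- an a priori implication tying the dominance of $q(Tf)$ over $\|Tf\|_\infty$ to the dominance of $p(f)$ over $\|f\|_\infty$ --- is indispensable for decoupling the supremum norm from the seminorm, and it is absent from your proposal. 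Until that lemma (or a substitute) is actually proved, the argument establishes nothing beyond the tautology $\|Tf\|_B=\|f\|_A$.
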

 \begin{proof}
Using property (St) a minor modification of the proofs of \cite[Lemma 1.2]{Ran} shows that if $f\in A(X,E)$
satisfies $q(Tf)< \|Tf\|_\infty$, then $p(f)\le \|f\|_\infty$. Then the same argument as in \cite[Proposition 1.3]{Ran} implies that $\|Tf\|_\infty \le \|f\|_\infty$ for all $f\in A(X,E)$. Similarly, the other inequality holds, i.e. $T$ is an isometry with respect to the supremum norms.
 \end{proof}
Next theorem is our main result.

\begin{theorem}\label{main2}
Let $X$ and $Y$ be compact Hausdorff spaces and let $E$ and $F$ be
real or complex  normed spaces, not assumed to be complete. Assume
that $A$ and $B$ are dense subspaces of $C(X,E)$ and $C(Y,F)$,
respectively containing constants and  $\|\cdot\|_A$ and $\|\cdot
\|_B$ are norms on $A$ and $B$ such that
 \[ \|\cdot \|_A= \max(\|\cdot\|_\infty, p(\cdot)) \;{\rm and}\; \|\cdot \|_B= \max(\|\cdot\|_\infty, q(\cdot))\]
for some seminorms $p$ and $q$ on $A$ and $B$, respectively, whose
kernels contain the constants. If $F$ satisfies one of the
following conditions

{\rm (i)} $S(F)$ contains an element $v_0$ with ${\rm
St}(v_0)=\{v_0\}$,

{\rm (ii)} ${\rm ext}(F_1^*)$ contains an element $w_0^*$ with
${\rm St}(w_0^*)=\{w_0^*\}$,

{\rm(iii)} $F$ satisfies property {\rm (D)},

\noindent then for any surjective real-linear isometry $T:(A, \|\cdot \|_A) \longrightarrow (B, \|\cdot\|_B)$
such that $T$ and $T^{-1}$ satisfy {\rm (St)}, there exist a
continuous map $\Phi:Y \longrightarrow X$, a family $\{V_y\}_{y\in
Y}$ of bounded real-linear operators from $E$ to $F$ with $\|V_y\|\le 1$ such that
\[Tf(y)= V_y(f(\Phi(y))) \qquad (f\in A).\]
Moreover, if $E$ satisfies the same condition as $F$, then $\Phi$ is a homeomorphism and each $V_y$ is a surjective  isometry.
\end{theorem}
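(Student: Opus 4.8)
The plan is to peel the problem down to Theorem \ref{main1} in two stages: first remove the seminorms $p,q$, and then pass from the dense subspaces $A,B$ to the ambient spaces of continuous functions.

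The first stage is immediate from the preceding results. Since $T$ is a surjective real-linear isometry for $\|\cdot\|_A,\|\cdot\|_B$ and both $T$ and $T^{-1}$ satisfy (St), Proposition \ref{Pr} tells me that $T$ is in fact an isometry for the supremum norms: $\|Tf\|_\infty=\|f\|_\infty$ for $f\in A$ and $\|T^{-1}g\|_\infty=\|g\|_\infty$ for $g\in B$. So from here on I treat $T\colon(A,\|\cdot\|_\infty)\to(B,\|\cdot\|_\infty)$ as a surjective real-linear isometry between dense subspaces of $C(X,E)$ and $C(Y,F)$, and $p,q$ disappear from the discussion.

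For the second stage I would extend $T$ to the completions and quote Theorem \ref{main1}. By the density results of Section 2 applied to $\widehat E$, the space $C(X,E)$ is dense in $C(X,\widehat E)$, and the latter is complete; hence the completion of $(C(X,E),\|\cdot\|_\infty)$ is $C(X,\widehat E)$, and similarly for $Y,F$. Since $A$ is dense in $C(X,E)$, it is dense in $C(X,\widehat E)$, so the surjective sup-isometry $T\colon A\to B$ extends uniquely to a surjective real-linear isometry $\widetilde T\colon C(X,\widehat E)\to C(Y,\widehat F)$. Applying Theorem \ref{main1} to $\widetilde T$ produces a continuous $\Phi\colon Y\to X$ and bounded real-linear $\widetilde V_y\colon\widehat E\to\widehat F$, $\|\widetilde V_y\|\le1$, with $\widetilde Tf(y)=\widetilde V_y(f(\Phi(y)))$. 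I then restrict to $A$: for $f\in A$ one has $Tf(y)=\widetilde Tf(y)=\widetilde V_y(f(\Phi(y)))$, and the constants save the day. Indeed $A$ contains all $\widehat e$ ($e\in E$), so $V_y(e):=\widetilde V_y(e)=T\widehat e\,(y)\in F$; thus $V_y:=\widetilde V_y|_E$ is a real-linear operator $E\to F$ with $\|V_y\|\le1$, and $Tf(y)=V_y(f(\Phi(y)))$ on $A$. For the moreover part, running the symmetric argument for $T^{-1}$ (using that $E$ satisfies the same hypothesis) makes $\Phi$ a homeomorphism; each $V_y$ is isometric as a restriction of the surjective isometry $\widetilde V_y$, and it is onto $F$ because for $u\in F$ the constant $\widehat u\in B$ gives $u=\widehat u(y)=V_y\big((T^{-1}\widehat u)(\Phi(y))\big)$, the argument lying in $E$.

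The delicate point — and the one I expect to be the main obstacle — is whether the hypothesis on $F$ survives the passage to the completion, since Theorem \ref{main1} is being applied to $\widehat F$, not $F$. For Case (ii) this is free: $\widehat F^*=F^*$ isometrically, so $\ext(\widehat F_1^*)=\ext(F_1^*)$ and $\St$ is computed identically, and in fact here one can even bypass completion, because controlling all $w^*\in\ext(F_1^*)$ at once and using $\|u\|=\max_{w^*\in\ext(F_1^*)}w^*(u)$ gives the separating property $f(\Phi(y))=0\Rightarrow Tf(y)=0$ directly for $f\in A$. The trouble is Cases (i) and (iii): the conditions $\St(v_0)=\{v_0\}$ and property (D) are genuinely about $S(F)$ and the $T$-sets of $F$, and a singleton star-like set or a clean $T$-set separation in $F$ need not persist in $\widehat F$ (a small dense subspace can miss the flat parts of the larger sphere). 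Reconciling this is the crux: either one proves a transfer lemma promoting these conditions from $F$ to $\widehat F$, or one avoids completion and reruns the proof of Theorem \ref{main1} directly on $A$ via the identification $A^*\cong C(X,E)^*$, which keeps the hypotheses on $F$ but forces one to manufacture, inside the abstract subspace $A$, the localized (bump-type) functions vanishing near $\Phi(y)$ that the proof of Theorem \ref{main1} inserts — a step that is automatic in $C_0$ but must here be obtained by approximation, pushing the exact equalities through a limiting argument. I expect that reconciling the localization with the non-completeness of $A$ is where the real effort lies.
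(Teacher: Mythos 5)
Your route is exactly the paper's: the entire published proof of this theorem is the single sentence ``This is immediate from Proposition \ref{Pr} and Theorem \ref{main1},'' so your first stage (using property (St) and Proposition \ref{Pr} to convert $T$ into a surjective sup-norm isometry between $A$ and $B$) is precisely what the authors intend, and your second stage (extend to the completions $C(X,\widehat{E})\to C(Y,\widehat{F})$, apply Theorem \ref{main1}, then restrict, using the constants to see that $V_y=\widetilde{V}_y|_E$ lands in $F$) is the only reasonable way to make ``immediate'' literal, since Theorem \ref{main1} is stated for the full space $C_0(X,E)$ and not for a dense subspace. Your handling of case (ii) is complete and correct: $F^*=\widehat{F}^{\,*}$ isometrically, so ${\rm ext}(F_1^*)$ and the relevant star-like sets are unchanged, and nothing is lost in the passage to the completion.

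For cases (i) and (iii), however, you have not given a proof: you have named the obstruction (whether ${\rm St}(v_0)=\{v_0\}$, respectively property (D), survives the passage from $F$ to $\widehat{F}$) and stopped. That obstruction is genuine, not a formality. A maximal convex subset of $S(F)$ need not remain maximal in $S(\widehat{F})$: if $v'\in S(\widehat{F})$ satisfies $\|v_0+v'\|=2$, the segment $[v_0,v']$ lies in $S(\widehat{F})$ but a dense proper subspace can meet that segment only at $v_0$, and approximating its points from $F$ only yields $\|v_0+w\|>2-\epsilon$, not the exact equality that membership in ${\rm St}(v_0)$ requires; the same exactness problem afflicts the T-sets of $\widehat{F}$ versus those of $F$ in case (iii). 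The alternative you mention --- running the proof of Theorem \ref{main1} directly on $A$ via $A^*\cong C(X,E)^*$ --- hits the dual difficulty that the bump functions $g$ with ${\rm coz}(g)\subseteq U$, $g(x)=e$, $\|g\|_\infty=1$ used there need not lie in $A$, and the key steps of Case (i) ($Tg(y)=v_0$ exactly, via $R=\{v_0\}$) are equalities that do not obviously survive approximation. So your write-up is incomplete at exactly this point; to be fair to you, the paper's one-line proof does not address it either, and closing it would require an explicit transfer lemma (or an extra hypothesis, e.g.\ completeness of $F$, under which $\widehat{F}=F$ and the issue disappears). You should either supply that lemma or flag the restriction; as it stands, only case (ii) of your argument is fully proved.
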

\begin{proof}
This is immediate from Proposition \ref{Pr} and Theorem  \ref{main1}.
 \end{proof}
As it was mentioned before, all vector-valued function spaces in Example \ref{exam} satisfy the requirement of the above theorem. Hence this theorem characterizes surjective isometries between these function spaces with respect to the norms considered on them.

 \end{document}